\theoremstyle{plain}
\newtheorem{theorem}{Theorem}
\newtheorem*{theorem*}{Theorem}
\newtheorem{lemma}[theorem]{Lemma}
\theoremstyle{definition}
\theoremstyle{remark}
\numberwithin{equation}{section}
\numberwithin{theorem}{section}
\begin{document}

\title[ Hardy Inequality ]{Boundary fractional Hardy's inequality in dimension one: The critical case}

\author[Adimurthi]{Adimurthi}
\address{\hskip-\parindent
Adimurthi,  IIT Kanpur and TIFR-CAM, Bangalore}
\email{adimurthi@iitk.ac.in}
\author[Purbita]{Purbita Jana}
\address{\hskip-\parindent
Purbita Jana,  Madras School  of Economics, Chennai, India}
\email{purbita@mse.ac.in}
\author[Prosenjit]{Prosenjit Roy}
\address{\hskip-\parindent
Prosenjit Roy, Department of Mathematics, IIT kanpur,
Kanpur -208016, India.}
\email{prosenjit@iitk.ac.in}

\subjclass[2020]{}
\keywords{fractional boundary Hardy Inequality, Poincar\'e inequality}

\begin{abstract}
We prove  fractional boundary Hardy's inequality in  dimension one for the critical case $sp =1$.  Optimality of the inequality is obtained for any $p$. The extra logarithmic correction term appears in usual fashion.  We also provide a concrete (workable) example of a sequence of smooth functions  that converges to constant function in $W^{s,p}((0,1))$ for $sp=1$ and $p=2$.
\end{abstract}

\maketitle
\section{Introduction}
Hardy's inequality for both local case and nonlocal case has attracted lot of research over the last few decades.  We refer to the  following articles on the study of local Hardy type inequalities: \cite{adinir,BREZIS1,BREZIS2,cazacu} and references there in for more work on the subject.  Regarding the work on fractional boundary (singularity of the integrand on the left hand side of the inequality is on the boundary)
 Hardy inequality, we refer to the work of Dyda in \cite{DYDA2004},  where the following fractional boundary Hardy's inequality is established under other appropriate assumptions on the domain $\Omega, s , p$:
   \begin{align}\label{fbh}
            \left\| \frac{ u}{\delta_x^s}\right\|_{L^p(\Omega)}\leq C[u]_{W^{s,p}(\Omega)}, \ \ \forall\,u\in C_c^\infty(\Omega), \ \Omega \subset \mathbb{R}^d,
        \end{align}
 where $\delta_x$ denoted the distance of the point $x$ from boundary of $\Omega$. $[u]_{W^{s,p}(\Omega)}$ (or simply, $[u]_{s,p, \Omega}$) denotes the semi norm in usual $W^{s,p}(\Omega)$ space. More precisely, Dyda proved the following theorem:
 \begin{theorem}\emph{[Dyda, 2004]}
     \label{dyaa2004} Let $s \in (0,1), p\geq 1$, then \ref{fbh} holds true in one of the following cases,
     \begin{enumerate}
         \item $\Omega$ is a bounded Lipschitz domain and $sp >1$.
         \item  $\Omega$ is the complement of a Lipschitz domain, $sp \neq 1$ and $sp\neq d$.
         \item $\Omega$ is the set above the graph of a Lipschitz domain from $\mathbb{R}^{d-1}\rightarrow \mathbb{R}$ and $sp \neq 1$.
         \item  $\Omega$ is the complement of a point and $sp \neq d$.
     \end{enumerate}
 \end{theorem}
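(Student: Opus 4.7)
The plan is to reduce each of the four cases to a one-dimensional fractional Hardy inequality on the half-line, namely: for $s\in(0,1)$, $p\geq 1$ with $sp\neq 1$, and $u\in C_c^\infty((0,\infty))$,
\begin{equation*}
\int_0^\infty \frac{|u(x)|^p}{x^{sp}}\,dx \leq C\, [u]_{W^{s,p}((0,\infty))}^p.
\end{equation*}
I would prove this via a dyadic decomposition $I_k=[2^k,2^{k+1}]$, splitting $u(x)=(u(x)-\langle u\rangle_{I_k})+\langle u\rangle_{I_k}$ on each $I_k$, where $\langle u\rangle_{I_k}$ is the mean. The oscillation piece is handled by a local fractional Poincar\'e inequality applied on each $I_k$, giving $\int_{I_k}|u-\langle u\rangle_{I_k}|^p\,dx \leq C\, 2^{ksp}\iint_{I_k\times I_k}\tfrac{|u(x)-u(y)|^p}{|x-y|^{1+sp}}\,dx\,dy$; dividing by $2^{ksp}$ and summing in $k$ absorbs this into the diagonal part of the semi-norm. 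The constant piece, the sequence $c_k=\langle u\rangle_{I_k}$, is controlled by telescoping: for $sp>1$ write $c_k=\sum_{j\leq k}(c_j-c_{j-1})$ (using $c_j=0$ for $j\ll 0$ from compact support), and for $sp<1$ the analogous sum runs to $+\infty$; a weighted $\ell^p$ Hardy summation against the weight $2^{-ksp}$ then gives the bound, with $sp\neq 1$ being exactly the convergence threshold for the geometric series that appears.

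Given the 1D inequality, each geometric case reduces as follows. For case (3), a bi-Lipschitz flattening turns $\Omega$ into the half-space $\mathbb{R}^{d-1}\times(0,\infty)$ and renders $\delta_x$ comparable to $x_d$; Fubini integrates out the tangential directions and applies the 1D inequality on each vertical line, with the chart change distorting the Gagliardo semi-norm by only a constant depending on the Lipschitz constant. Case (1) follows by covering $\partial\Omega$ with finitely many Lipschitz-graph charts $U_i$, choosing a subordinate partition of unity $\{\eta_i\}$ together with an interior patch $U_0\Subset\Omega$, and applying case (3) to each $\eta_i u$ (extended by zero); the interior piece is handled trivially since $\delta_x$ is bounded below on its support, and $sp>1$ is exactly what the 1D argument needs. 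Case (2) is analogous near the bounded boundary, with the behavior at infinity handled by the additional condition $sp\neq d$ via a global polar estimate. Case (4) is purely polar: writing $x=r\omega$,
\begin{equation*}
\int_{\mathbb{R}^d\setminus\{0\}}\frac{|u|^p}{|x|^{sp}}\,dx = \int_{S^{d-1}}\!\int_0^\infty |u(r\omega)|^p\,r^{d-1-sp}\,dr\,d\omega,
\end{equation*}
and one applies the 1D inequality after absorbing $r^{d-1}$, which explains why $sp\neq d$ (rather than $sp\neq 1$) is the forbidden value.

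The main technical obstacle is the 1D inequality itself, and specifically the telescoping control of the dyadic averages $c_k$. This is exactly where the hypothesis $sp\neq 1$ enters: the weighted $\ell^p$ Hardy summation for $(c_k)$ fails at $sp=1$ because the underlying geometric series becomes harmonic, producing a logarithmic divergence that cannot be absorbed into the Gagliardo semi-norm. This borderline failure is precisely the gap that the present paper addresses by introducing a logarithmic correction.
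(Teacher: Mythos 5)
This statement is not proved in the paper at all: it is quoted as background and attributed to Dyda's 2004 paper \cite{DYDA2004}, so there is no in-paper proof to compare against. Evaluating your sketch on its own merits, then: the overall plan --- prove a one-dimensional fractional Hardy inequality on the half-line by dyadic decomposition and a telescoping/weighted $\ell^p$ Hardy summation, then transfer it to the four geometric settings by flattening, Fubini, a partition of unity, and polar coordinates --- is a legitimate route, and you correctly identify $sp\neq 1$ as the convergence threshold for the dyadic sum and $sp\neq d$ as the analogous threshold for the radial variable in case (4). This is close in spirit to the present paper's dyadic argument in Theorem \ref{intermediate} (and to the Nguyen--Squassina approach it follows), though Dyda's original proof is organized differently, via a sufficient measure-type criterion verified over Whitney-like coverings rather than a hands-on flattening.

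There are, however, real gaps that a careful write-up would have to close. In the partition-of-unity step for case (1), $[\eta_i u]_{W^{s,p}}$ is \emph{not} controlled by $[u]_{W^{s,p}}$ alone; the standard estimate produces an extra $\|u\|_{L^p}$ term (from the $|u(y)|\,|\eta_i(x)-\eta_i(y)|$ part of the Gagliardo kernel), and absorbing it requires a fractional Poincar\'e inequality for $u\in C_c^\infty(\Omega)$ with $sp>1$, which must itself be justified without circularity. The bi-Lipschitz stability of the Gagliardo seminorm that your flattening step invokes is true but nontrivial and should be stated as a lemma with the constant tracked through the Lipschitz norms of the chart. Case (2) is essentially asserted: the behavior near the bounded obstacle and the behavior at infinity have different dominant scales, and the ``global polar estimate'' that is supposed to use $sp\neq d$ needs to be spelled out, since the naive reduction via Fubini breaks down at infinity where $\delta_x\sim |x|$ is not a coordinate. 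Finally, you implicitly use that the full double integral over $\Omega\times\Omega$ controls the sum of the local double integrals $\sum_k\iint_{I_k\times I_k}$ plus the near-diagonal cross terms; this is fine, but the cross terms $\iint_{I_k\times I_{k+1}}$ are what actually feed the telescoping estimate on the averages, exactly as in Lemma \ref{lem1} of the paper, so they deserve explicit mention rather than being folded silently into ``the diagonal part.''
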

 The main aim of this article is to   extend point one of  Theorem \ref{dyaa2004} to the critical case $sp=1$ in dimension $1$.  In fact our main result (Theorem \ref{mainresult}) is more general, as the left hand side of our inequality holds true for more general parameter. By critical cases in the above theorem we mean 
 \begin{enumerate}
     \item $sp = 1$ in (1),
     \item  $sp =1$ or $sp=d $ in (2),
     \item  $sp =1$ in (3),
     \item  $sp =d$ in (4).
 \end{enumerate}
 To the best of our knowledge there are only  three results that deals with the critical case for the fourth case of Theorem \ref{dyaa2004}. They are due to Squassina-Nguyen in \cite{Nguyen2018}, \cite{Nguyenmarco2019} and by Triebel-Edmund in \cite{tribel} for the fourth case of Theorem \ref{dyaa2004}.  Again, to the best of our knowledge no other work deals with the critical case for the first three cases in Theorm \ref{dyaa2004}. In Theorem 3.1 of \cite{Nguyen2018} authors proved the following result:
\begin{theorem}\emph{[Nguyen and Squassina, 2018]}\label{Nguyen2018}
    Let $d\geq 1$, $p>1,\,s\in (0,1) ,\tau \geq p, sp=d$.
Then the following hold for some  constant $C>0$:   if $1/\tau+\gamma/d =0,$ then 
        \begin{align*}
            \left\| \frac{|x|^\gamma u}{\log(4R/|x|)}\right\|_{L^\tau(\mathbb{R}^d)}\leq C[u]_{W^{s,p}(\mathbb{R}^d)}, \ \ \forall\,u\in C^1_c(\mathbb{R}^d), \ \textrm{Support} (u) \subset B(0,R).
        \end{align*}
  \end{theorem}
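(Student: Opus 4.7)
The plan is to carry out a dyadic annular decomposition at the origin and to split $u$ on each annulus into a local average plus an oscillation. Set $R_k := 2^{-k}R$ for $k\geq -1$ and $A_k := B(0,R_k)\setminus B(0,R_{k+1})$, so that $\{A_k\}_{k\geq 0}$ is a disjoint cover of $B(0,R)\setminus\{0\}$. Let $\bar u_k := |A_k|^{-1}\int_{A_k}u$, and note $\bar u_{-1}=0$ since $u$ vanishes on $A_{-1}\subset \RR^d\setminus B(0,R)$. On $A_k$ one has $|x|\sim R_k$ and $\log(4R/|x|)\sim k+1$, and the constraint $1/\tau+\gamma/d=0$ forces $|x|^{\gamma\tau}=|x|^{-d}$, so the left-hand side of the target inequality is controlled by
\begin{align*}
\sum_{k\geq 0}\frac{1}{R_k^d\,(k+1)^\tau}\int_{A_k}|u|^\tau\,dx.
\end{align*}
Writing $|u|^\tau \leq 2^{\tau-1}(|u-\bar u_k|^\tau+|\bar u_k|^\tau)$ reduces the task to estimating an oscillation sum and a mean sum.

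For the oscillation sum, rescale $A_k$ to a fixed unit annulus $A$. In the critical regime $sp=d$ the fractional Sobolev--Poincar\'e inequality on $A$ delivers $\|v-\bar v\|_{L^\tau(A)} \leq C(\tau)[v]_{W^{s,p}(A)}$ for every finite $\tau$ (through a Moser--Trudinger type embedding). Since $[\,\cdot\,]_{W^{s,p}}$ is scale invariant when $sp=d$ while $\|\cdot\|_{L^\tau}$ scales as $R_k^{d/\tau}$, this gives
\begin{align*}
\int_{A_k}|u-\bar u_k|^\tau\,dx \leq C(\tau)\,R_k^{d}\,[u]_{W^{s,p}(A_k)}^\tau.
\end{align*}
Setting $a_k := [u]_{W^{s,p}(A_k)}^p$ one has $\sum_k a_k \leq [u]_{W^{s,p}(\RR^d)}^p$ by non-overlap, and the embedding $\ell^{\tau/p}\hookrightarrow \ell^1$ (valid since $\tau\geq p$) yields $\sum_k a_k^{\tau/p} \leq \bigl([u]_{W^{s,p}(\RR^d)}^p\bigr)^{\tau/p}$, so the oscillation sum is dominated by $C(\tau)\,[u]_{W^{s,p}(\RR^d)}^\tau$.

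The mean sum is handled by telescoping. Writing $c_j := |\bar u_{j+1}-\bar u_j|$ and using $\bar u_{-1}=0$, one has $|\bar u_k|\leq \sum_{j=-1}^{k-1}c_j$. Since $|x-y|\sim R_j$ for $x\in A_j$ and $y\in A_{j+1}$, Jensen's inequality gives
\begin{align*}
c_j^p \leq C\int_{A_j}\!\int_{A_{j+1}}\frac{|u(x)-u(y)|^p}{|x-y|^{d+sp}}\,dx\,dy =: C\,\mathcal{E}_j, \qquad \sum_j \mathcal{E}_j \leq [u]_{W^{s,p}(\RR^d)}^p.
\end{align*}
The discrete Hardy inequality then yields
\begin{align*}
\sum_{k\geq 0}\frac{|\bar u_k|^\tau}{(k+1)^\tau} \leq \sum_{k\geq 0}\frac{1}{(k+1)^\tau}\Bigl(\sum_{j=-1}^{k-1}c_j\Bigr)^\tau \leq C\sum_j c_j^\tau,
\end{align*}
valid for $\tau>1$ (ensured by $\tau\geq p>1$). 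Finally $\sum_j c_j^\tau = \sum_j (c_j^p)^{\tau/p} \leq (\sum_j \mathcal{E}_j)^{\tau/p} \leq [u]_{W^{s,p}(\RR^d)}^\tau$ using $\tau/p\geq 1$ once more, completing the estimate.

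I expect the main obstacle to be the critical fractional Sobolev--Poincar\'e estimate on the unit annulus with a $\tau$-dependent constant, together with keeping every step scale correct in the absence of a subcritical Sobolev embedding. No delicate cancellation is required, but the $\ell^{\tau/p}\hookrightarrow \ell^1$ contraction and the discrete Hardy step must be kept in harmony so that no extra logarithmic power is lost in the summation.
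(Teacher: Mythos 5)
Your proposal is essentially correct, but note first that the paper cites this theorem from Nguyen--Squassina (2018) rather than proving it; what the paper does prove is the closely analogous Theorem \ref{mainresult} (in one dimension, on an interval), explicitly ``strictly following the line'' of Nguyen--Squassina with modifications. Comparing your argument to that proof: your dyadic annular decomposition, the oscillation/mean split $|u|^\tau \lesssim |u-\bar u_k|^\tau + |\bar u_k|^\tau$, the scale-invariant critical Sobolev--Poincar\'e on annuli (the paper's Lemma \ref{lemmasob}), and the $\ell^{\tau/p}\hookrightarrow\ell^1$ summation of the localized seminorms (the paper's Lemma \ref{lem3}) all match the paper's strategy step for step. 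The one place you genuinely diverge is in handling the mean sum: you telescope $\bar u_k$, control $c_j=|\bar u_{j+1}-\bar u_j|$ by the Gagliardo energy of adjacent annuli via Jensen (same as the paper's Lemma \ref{lem1}), and then invoke the classical discrete Hardy inequality $\sum_k (k+1)^{-\tau}(\sum_{j<k}c_j)^\tau\lesssim\sum_j c_j^\tau$. The paper instead avoids citing discrete Hardy by proving an elementary numerical inequality (Lemma \ref{lem2}), choosing weights $D_\tau^k=\bigl(\tfrac{-k}{-k-1/2}\bigr)^{\tau-1}$, and running an explicit absorption/re-indexing argument \eqref{8}--\eqref{ss}; this is in effect a self-contained proof of the weighted discrete Hardy estimate your argument uses as a black box. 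Both routes are valid; yours is cleaner if one is willing to quote discrete Hardy, while the paper's is more self-contained. One small point worth flagging explicitly: the scale invariance of the critical Sobolev--Poincar\'e on annuli, which you use, holds in any dimension precisely because $sp=d$, so your proof is correctly uniform in $d$ (the paper's restriction to $d=1$ for its own theorem comes from its different normalization $sp=1$, not from any obstruction here).
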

In fact  Squassina-Nguyen's result [in \cite{Nguyen2018}] is much more general than stated above. They have established the full range
Caffarelli-Kohn-Nirenburg [see, \cite{ckn1,ckn2}] inequality for fractional Sobolev spaces. But their result  is a point type (singularity at origin only) fractional Hardy inequality.  Now we introduce our main result. We prove the following fractional boundary Hardy inequality in dimension one:
\begin{theorem}\label{mainresult}Let
$p>1,\,s\in (0,1) ,\tau>0, sp=1$, $\gamma\in  [-s,0)$ .
Then the following inequality holds for some  constant $C>0$:   if $1 + \tau\gamma=0,$ then 
        \begin{align*}
            \left\| \frac{|\delta_x|^\gamma (u- (u)_{(0,1)})}{\log(2/\delta_x)}\right\|_{L^\tau((0,1))}\leq C[u]_{W^{s,p}((0,1))}, \ \ \forall\,u\in W^{s,p}((0,1)),
        \end{align*}
        where $\delta_x = dist(x, (0,1)^c)$ and $(u)_{(0,1)} = \int_0^1u(x) dx.$ When $\tau = p$ the inequality above is sharp, in the sense that one cannot further multiply  the left hand side of the above inequality by a function that tends to infinity as $x \rightarrow 0+$ or $1-$.
  \end{theorem}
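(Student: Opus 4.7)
The plan is to reduce the inequality to a dyadic sum near each boundary point, estimate local oscillations via a scale-invariant fractional Poincar\'e--Sobolev estimate (available exactly because $sp=1$), and control the telescoping of averages across scales by a discrete Hardy inequality. The logarithmic correction $\log(2/\delta_x)$ emerges precisely because the resulting sum sits at the critical Hardy exponent.

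By the symmetry of $\delta_x$ about $x=1/2$ and density of smooth functions in $W^{s,p}((0,1))$, it suffices to prove
\[
\int_0^{1/2}\frac{|u(x)-(u)_{(0,1)}|^\tau}{x(\log(2/x))^\tau}\,dx\leq C[u]^\tau_{W^{s,p}((0,1))}
\]
(using $\tau\gamma=-1$). I would decompose $(0,1/2)=\bigcup_{k\geq 0} I_k$ with $I_k=(2^{-k-2},2^{-k-1})$, set $\bar u_k:=|I_k|^{-1}\int_{I_k}u$, note $\log(2/x)\asymp k$ and $\int_{I_k}dx/x=\log 2$ on $I_k$, and split
\[
|u(x)-(u)_{(0,1)}|^\tau\leq C_\tau\bigl(|u(x)-\bar u_k|^\tau+|\bar u_k-\bar u_0|^\tau+|\bar u_0-(u)_{(0,1)}|^\tau\bigr).
\]
For the local piece, scale invariance of $[\cdot]_{W^{s,p}}$ at $sp=1$ together with the Sobolev embedding $W^{s,p}(0,1)\hookrightarrow L^\tau(0,1)$ (valid for any $\tau<\infty$ at the critical exponent) gives $\int_{I_k}|u-\bar u_k|^\tau\leq C|I_k|[u]^\tau_{W^{s,p}(I_k)}$; the weight $x^{-1}(\log(2/x))^{-\tau}\sim|I_k|^{-1}k^{-\tau}$ on $I_k$ then yields $Ck^{-\tau}[u]^\tau_{W^{s,p}(I_k)}$, and summation via $\ell^p\hookrightarrow\ell^\tau$ (allowed since $\gamma\geq-s$ forces $\tau\geq p$) bounds the total by $C[u]^\tau_{W^{s,p}((0,1))}$.

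For the telescoping piece, a Jensen estimate on averages over adjacent dyadic intervals gives $|\bar u_{j+1}-\bar u_j|^p\leq C[u]^p_{W^{s,p}(I_j\cup I_{j+1})}$---the balance of $|I_j|$-factors works out precisely because $1+sp=2$---and bounded overlap yields $\sum_j|\bar u_{j+1}-\bar u_j|^p\leq C[u]^p$, hence the same bound with exponent $\tau$. The discrete Hardy inequality then produces
\[
\sum_k\frac{|\bar u_k-\bar u_0|^\tau}{k^\tau}=\sum_k\Bigl(\tfrac{1}{k}\sum_{j<k}|\bar u_{j+1}-\bar u_j|\Bigr)^\tau\leq C\sum_j|\bar u_{j+1}-\bar u_j|^\tau\leq C[u]^\tau.
\]
The constant piece $|\bar u_0-(u)_{(0,1)}|^\tau$ is $\leq C[u]^\tau$ by Jensen plus the standard fractional Poincar\'e inequality on $(0,1)$, and sums against $\sum_k k^{-\tau}<\infty$ (which converges since $\tau\geq p>1$).

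For sharpness when $\tau=p$, I would argue by contradiction: suppose a nonnegative $\phi$ with $\phi(x)\to\infty$ as $x\to 0^+$ could be inserted on the left. Testing against a sequence $u_n$ built from truncated iterated logarithms---roughly $u_n(x)=\min\{\log\log(2/x),\log\log(2n)\}$ smoothed at the cutoff---one checks that $[u_n]^p_{W^{s,p}((0,1))}$ remains bounded while the mass of $|u_n-(u_n)_{(0,1)}|^p/(x(\log(2/x))^p)$ concentrates arbitrarily close to $0$, so the $\phi$-weighted integral diverges, contradicting the putative improvement. The hardest step is the telescoping estimate: the weight $k^{-\tau}$ sits exactly at the critical discrete Hardy exponent, so weakening the $\log(2/x)$ factor would make $\sum k^{-\tau}$ diverge and the argument collapse---this is the analytic reason why the logarithmic correction is unavoidable at $sp=1$.
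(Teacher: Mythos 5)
Your main inequality proof is correct, but it takes a genuinely different route from the paper's. After the same dyadic decomposition, the paper controls the sum $\sum_k |(u)_{A_k}|^\tau/(-k)^\tau$ by an ad hoc absorption scheme: it couples the elementary inequality $(|a|+|b|)^\tau\leq D_\tau|a|^\tau+C(D_\tau-1)^{1-\tau}|b|^\tau$ (Lemma~\ref{lem2}) with a delicately chosen family of parameters $D_\tau^k=((-k)/(-k-1/2))^{\tau-1}$, producing a telescoping sum whose residual weight is asymptotically $(-k)^{-\tau}$. You replace all of this by the classical discrete Hardy inequality applied to $a_j=|\bar u_{j+1}-\bar u_j|$ with $\tau>1$. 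These are really the same mechanism, but your invocation of the ready-made discrete Hardy inequality is cleaner and avoids the bookkeeping (and, in fact, sidesteps a small asymptotic slip in the paper's display~\eqref{ss}, where the difference of weights is written as $\sim(-k)^{1-\tau}$ when it is actually $\sim(-k)^{-\tau}$ -- harmless for the paper, but confusing). Your observations that $\gamma\geq -s$ forces $\tau\geq p$, and that the one-dimensional critical scaling makes the Sobolev--Poincar\'e constant scale-free, are exactly the right points and match the paper's reliance on its Lemma~\ref{lemmasob} and Lemma~\ref{lem3}.

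The sharpness argument, however, has a genuine gap. Your test functions $u_n(x)=\min\{\log\log(2/x),\log\log(2n)\}$ converge to $\log\log(2/x)$, which already lies in the weighted space: substituting $v=\log(2/x)$ turns $\int_0^{1/2}\frac{(\log\log(2/x))^p}{x(\log(2/x))^p}\,dx$ into $\int^\infty\frac{(\log v)^p}{v^p}\,dv<\infty$ for $p>1$. So the unweighted left-hand side does not blow up along your sequence; its mass does not concentrate near the origin but settles into a fixed integrable profile. Consequently, for a \emph{slowly} diverging weight such as $\phi(x)=\log\log\log(16/x)$, the $\phi$-weighted left-hand side also stays finite (the transformed integrand $\sim(\log v)^p\log\log v/v^p$ is still integrable), while $[u_n]^p_{W^{s,p}}$ remains bounded -- no contradiction. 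The sharpness statement requires defeating \emph{every} $\phi\to\infty$, which forces you to use a sequence along which the ratio (unweighted LHS)$/[u]^\tau$ stays bounded away from $0$ and $\infty$, with all the mass living at scale $\epsilon\to0$. That is precisely what the paper's $u_\epsilon(x)=|\log\epsilon|/|\log x|$ on $(0,\epsilon)$, $=1$ on the interior, accomplishes: one computes $[u_\epsilon]_{s,p}^2\sim C/|\log\epsilon|$ and $\int_0^1(u_\epsilon-\bar u_\epsilon)^2/(\delta_x\log^2\delta_x)\sim \tfrac{1}{3}/|\log\epsilon|$, both coming from $(0,\epsilon)$, so inserting $\phi$ multiplies the left-hand side by roughly $\inf_{(0,\epsilon)}\phi^2\to\infty$ and breaks the inequality. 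Your iterated-logarithm family captures a different (supercritical) phenomenon and does not supply that quantitative balance at every scale.
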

    We point out that   ``in dimension one" Theorem \ref{mainresult} is very different from  Theorem \ref{Nguyen2018}, as one can see the appearance of $u - (u)_{(0,1)}$ on the left side of the inequality in our main result which was absent in Theorem \ref{Nguyen2018}. This is because, for the case $sp=1$, one has $W_0^{s,p}((0,1))=W^{s,p}((0,1))$ [see, Theorem 6.76, \cite{leoni}] and hence constant functions are there in $W^{s,p}((0,1))$. Secondly, the semi norm in Theorem \ref{Nguyen2018} is on full Euclidean space, but in our case it is in $(0,1)$. Results similar ( for $\tau = p$ only) to Theorem \ref{Nguyen2018} for the case $sp=d$ was obtained in \cite{tribel} by using interpolation technique.  We could not adopt \cite{tribel} to get Theorem \ref{mainresult}. The method of our proof strictly follows the  line as it is in the proof of Theorem \ref{Nguyen2018} with necessary modifications.   The main reason that our result is restricted to dimension one is that  the equation in Lemma \ref{lemmasob} is scale invariant  of $\lambda$ only in dimension one ($C$ will depend on $\lambda$ for higher dimensions).
    
    To the best of our knowledge this is the first result concerning the fractional boundary Hardy inequality in the critical case. Analogue of Theorem \ref{mainresult} for $sp<1$ is done in \cite{DYDA2004}.  The best $C$ in Theorem \ref{mainresult} is also unknown.  
    
    Available literature on Hardy type inequality is huge and it is very difficult to mention all of them. For generalisation of Hardy type inequalities to Orlicz spaces we refer to the work of \cite{cianchi2021, roy2022,Salort2023, salort2022}. For other related work on the subject we refer to  \cite{adi3,arka, sandeep,brezis4,cazacu2} and the references there in. We refer to \cite{hitch,leoni} for a good reading on fractional Sobolev space.  When $\Omega = \mathbb{R}^d\setminus\{ 0\}$ in Theorem \ref{dyaa2004}, the best constant $C$ in \eqref{fbh}  is studied in \cite{rupert} by finding an appropriate ground state solution. For other works on  fractional boundary Hardy inequality we refer to \cite{abdell2017,dyda1, dyda3,Dyda2022,dyda2, ritva, kufner}. 

  As pointed earlier it is well known that for the case $sp=1$, one has $W_0^{s,p}((0,1))=W^{s,p}((0,1))$.  This implies that $C_c^\infty((0,1))$ (or simply Lipschitz continuous functions with $0$ boundary values) are dense in $W^{s,p}((0,1)).$ One can see that usual cut-off function 
  \begin{equation*}
     \rho_{\epsilon}=  \begin{cases}
     x/\epsilon,  \  \hspace{3mm} x \in (0,\epsilon) \\
     1, \ \hspace{3mm} x \in (\epsilon, 1-\epsilon)\\
     (1-x)/\epsilon, \hspace{3mm} x \in (1-\epsilon,1)
      \end{cases}
  \end{equation*}
does not  converge to the constant function $1$ in $W^{s,p}$ norm. In Theorem \ref{dense} we give an explicit $u_\epsilon$ which works. To the best of our knowledge, explicit workable expression for such $u_\epsilon$ is not written down in literature explicitly. Appearance of $\log$ type of cut off is present in our $u_\epsilon$ as well, which is somewhat related to the appearance of $\log$ term in Theorem \ref{mainresult}.\smallskip 

The paper is organised as follows: In the next section we provide some necessary preliminaries in form of lemma and notations to be used in this article. Third section contains the proof of the density theorem [Theorem \ref{dense}] as some part of its calcualtion will be used in the proof of Theorem \ref{mainresult}.  The last section contains the proof of the main theorem,  Theorem \ref{mainresult}.

  \section{Preliminaries}
Throughout this article for a  measurable set $\omega \subset \mathbb{R}$, $(u)_\omega$ will denote the average of the function $u$ over $\omega$, that is 
$$(u)_\omega := \frac{1}{|\omega| }\int_\omega u(x)dx=  \fint_\omega u(x)dx.$$
Whereas for $s \in (0,1), p >1$, $$|u|_{p,\omega}^p := \int_{\omega} |u|^p, \ \textrm{and}  \ |u|_{s,p,\omega}^p:= \int_{\omega} |u|^p + \int_{\omega}\int_{\omega} \frac{|u(x)-u(y)|^p}{|x-y|^{1+sp}}dxdy$$ will denote the usual $L^p(\omega)$ and $W^{s,p}(\omega)$ norm respectively. $|E|$ will simply denote the Lebesgue measure of a set in $\mathbb{R}^d.$ Throughout this article $C >0$ will denote a generic constant that may change from line to line. \smallskip

\noindent\textbf{Poincar\'e Inequality}  \ Let $a>0$, $p \geq 1$, $s\in (0,1)$ then for each $u \in C^1{([0,a])}$, one has 
$$ \int_{0}^a |u(s)-(u)_{(0,a)}|^p ds \leq C(s,p,a)[u]_{s,p,(0,a)}^p.$$
\smallskip 
The following lemma is Lemma 2.2 in \cite{Nguyen2018} with choice of $a=1 =d$ and $sp=1$.
\smallskip
\begin{lemma}\emph[Sobolev Inequality for the critical case]\label{lemmasob}  \ Let $sp=1$ and $D= (\lambda r,\lambda R)$. For any $\tau >0$ and  $\lambda, r, R >0$,  there exist a constant $C=C(r, R,s, p) >0 $ such that 
   $$\left( \fint_{r\lambda}^{R\lambda}|u(x)-(u)_D|^\tau dx \right)^{\frac{1}{\tau}} \leq C [u]_{s,p, D}.$$
 \end{lemma}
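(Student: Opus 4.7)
The engine of the lemma is the scale invariance that the hypothesis $sp=1$ produces in dimension one, so my plan is to first strip the parameter $\lambda$ out of the inequality and then establish the reduced statement on the fixed interval $(r,R)$.

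\emph{Step 1 (scaling).} Given $u \in W^{s,p}(D)$ with $D=(\lambda r,\lambda R)$, I would set $v(y) := u(\lambda y)$ for $y \in (r,R)$. A straightforward change of variables $x=\lambda y$ gives $(v)_{(r,R)} = (u)_D$ and
\begin{align*}
\fint_{\lambda r}^{\lambda R}|u(x)-(u)_D|^\tau\,dx &= \fint_r^R |v(y)-(v)_{(r,R)}|^\tau\,dy,\\
[u]_{s,p,D}^p &= \lambda^{1-sp}\,[v]_{s,p,(r,R)}^p = [v]_{s,p,(r,R)}^p,
\end{align*}
the last equality being where $sp=1$ is used crucially (and where the argument breaks down for $d\ge 2$, confirming the restriction to dimension one noted after Theorem \ref{mainresult}). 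So it suffices to prove the inequality with $\lambda=1$ on the fixed interval $(r,R)$, with a constant depending only on $r,R,s,p$.

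\emph{Step 2 (reduced inequality).} On $(r,R)$ I would split on $\tau$. When $\tau = p$, the statement is precisely the Poincar\'e inequality recorded just above the lemma. When $\tau \le p$, Jensen's inequality on the bounded interval $(r,R)$ immediately upgrades the $\tau=p$ case. When $\tau > p$, I would combine the Poincar\'e inequality with the critical Sobolev embedding $W^{s,p}((r,R)) \hookrightarrow L^q((r,R))$ valid for every $q<\infty$ at $sp=1=d$: apply it to $w := u - (u)_{(r,R)}$, which has zero average, so that the full $W^{s,p}$-norm of $w$ is controlled by $[w]_{s,p,(r,R)} = [u]_{s,p,(r,R)}$ through the Poincar\'e inequality again. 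The resulting constant depends on $r,R,s,p,\tau$, which is the allowed dependence.

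\emph{Main obstacle.} The scaling in Step 1 is essentially cosmetic, so the only real content is the reduced embedding. For $\tau \le p$ everything is immediate from the stated Poincar\'e inequality, so the one place where care is needed is the regime $\tau > p$: here one needs the critical (Trudinger--Moser-type) fractional Sobolev embedding into all $L^q$ with $q<\infty$ on a bounded one-dimensional interval, rather than the subcritical Morrey-type embedding familiar from $sp>1$. Since this is a standard fact for fractional Sobolev spaces at the critical exponent on bounded intervals (cf.\ \cite{hitch,leoni}), I would simply invoke it rather than reprove it, keeping the overall proof short as in Lemma 2.2 of \cite{Nguyen2018}.
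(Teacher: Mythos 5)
Your proposal is sound, and it reconstructs the argument in the way that the cited reference (Lemma~2.2 of \cite{Nguyen2018}) almost certainly runs; the paper itself does not give a proof of this lemma but simply invokes that reference with $a=d=1$ and $sp=1$, so there is no independent argument in the text to compare against. The change of variables $v(y)=u(\lambda y)$ correctly gives $(v)_{(r,R)}=(u)_D$, $\fint_{\lambda r}^{\lambda R}|u-(u)_D|^\tau\,dx = \fint_r^R|v-(v)_{(r,R)}|^\tau\,dy$, and $[u]_{s,p,D}^p = \lambda^{1-sp}[v]_{s,p,(r,R)}^p$, so the seminorm is genuinely $\lambda$-invariant when $sp=1$; and the reduced embedding on $(r,R)$ follows for $\tau\le p$ by Jensen and the Poincar\'e inequality, and for $\tau>p$ from the critical embedding $W^{s,p}\hookrightarrow L^q$, $q<\infty$, for $sp=d=1$ on a bounded interval (Theorem~6.10 of Di Nezza--Palatucci--Valdinoci, cf.\ \cite{hitch}). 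Two small remarks. First, I would not call the scaling in Step~1 ``cosmetic'': the entire reason the lemma is formulated with the free parameter $\lambda$ and with a constant $C$ independent of $\lambda$ is precisely so that it can be applied uniformly over the dyadic annuli $A_k$ with $\lambda=2^k$ in the proof of Theorem~\ref{intermediate}; the $\lambda$-invariance is the content, not a cosmetic reduction. Second, your constant ends up depending on $\tau$ as well as $r,R,s,p$, whereas the lemma as printed writes $C=C(r,R,s,p)$; this must depend on $\tau$ (the $L^q$-embedding constant blows up as $q\to\infty$), so the stated dependence in the paper is an omission rather than a defect in your argument --- in all applications $\tau$ is fixed at the outset, so this is harmless.
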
  
\begin{lemma}\label{lem1}
Let  $ E = (a,b), F=(b,c) \subset \mathbb{R}$ be disjoint. Then for any $\tau \geq 1 $  one has for some constant $C >0$, $\forall u \in C^1([a,c]),$
$$|(u)_F -  (u)_E| \leq \frac{C (c-a)}{\min\{(b-a),(c-b)\}} [u]_{s,p, E\cup F}.$$
\end{lemma}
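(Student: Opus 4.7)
The plan is to compare $(u)_E$ and $(u)_F$ with the average $(u)_D$ over the union $D := E \cup F$, which has length $c - a$, and then invoke a Poincar\'e-type estimate on $D$.

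For a subinterval $A \in \{E, F\}$ of $D$, I would first write
$$
(u)_A - (u)_D = \fint_A \bigl(u(x) - (u)_D\bigr)\,dx,
$$
which yields the crude bound
$$
|(u)_A - (u)_D| \leq \frac{1}{|A|}\int_A |u - (u)_D|\,dx \leq \frac{1}{|A|}\int_D |u - (u)_D|\,dx.
$$

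The core estimate is then
$$
\int_D |u - (u)_D|\,dx \leq C(c-a)\,[u]_{s,p, D},
$$
with $C$ depending only on $s,p$. This is exactly the $\tau = 1$ case of Lemma \ref{lemmasob} multiplied through by $|D| = c - a$. Equivalently, one can derive it from the $L^p$ Poincar\'e inequality stated in the preliminaries via Jensen (or H\"older) to pass from mean $L^p$ deviation to mean $L^1$ deviation; the point is that at the critical exponent $sp = 1$ in dimension one the seminorm $[\cdot]_{s,p}$ is scale invariant, so a rescaling reduces any interval to a fixed reference interval and the Poincar\'e constant scales linearly in the length, leaving the normalized constant length independent.

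Combining the two displays gives $|(u)_A - (u)_D| \leq \frac{C(c-a)}{|A|}[u]_{s,p, D}$ for $A \in \{E, F\}$. Applying the triangle inequality $|(u)_F - (u)_E| \leq |(u)_F - (u)_D| + |(u)_E - (u)_D|$ and the trivial bound $\frac{1}{|E|} + \frac{1}{|F|} \leq \frac{2}{\min\{|E|,|F|\}}$ delivers the claimed inequality.

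The main obstacle — and the only substantive step — is justifying that the Poincar\'e constant on $D$ is independent of the length and location of $D$. This relies crucially on the scale invariance available in dimension one at $sp = 1$, which is precisely what Lemma \ref{lemmasob} packages; once this is in hand, the remainder of the argument is routine manipulation of averages and the triangle inequality.
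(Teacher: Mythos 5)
Your proposal is correct and follows essentially the same approach as the paper: insert the average over the union $D = E \cup F$, bound $|(u)_A - (u)_D|$ for $A \in \{E,F\}$ by the mean oscillation over $D$, and close with Lemma \ref{lemmasob}. Your version is a slight streamlining in that you work directly with $L^1$ averages (the $\tau = 1$ case of Lemma \ref{lemmasob}), so the triangle inequality step $\left|\fint_A (u-(u)_D)\right| \le \fint_A |u-(u)_D|$ is immediate, whereas the paper raises everything to the $\tau$-th power and invokes Jensen's inequality, which is where the hypothesis $\tau \ge 1$ is used; since the conclusion does not involve $\tau$, your route makes that hypothesis vacuous.
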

\begin{proof} 
Let us start with the expression $|(u)_F -(u)_E|^\tau$.
\begin{multline*}
    \label{4}
    |(u)_F -(u)_E|^\tau = \left|\fint_F \{ u(x) - (u)_{E\cup F} \}dx - \fint_E\{u(x) - (u)_{E\cup F} \} dx\right|^\tau  \\ \leq C\left|\fint_F \{ u(x) - (u)_{E\cup F} \}dx\right|^\tau  + C \left|\fint_E\{u(x) - (u)_{E\cup F} dx \} \right|^\tau .
\end{multline*}
Now applying Jensen's inequality (this is the step where the assumption $\tau \geq 1$ is required) we have 
\begin{equation*}
    \label{5}
  |(u)_F -(u)_E|^\tau  \leq C\fint_F \left|u(x) - (u)_{E\cup F} \right|^\tau dx + C \fint_E\left|u(x) - (u)_{E\cup F} \right|^\tau dx.
\end{equation*}
This implies 
\begin{multline*}
     |(u)_F -(u)_E|^\tau \leq \frac{C}{\min\{|E|,|F|\}}\int_{E\cup F}\left|u(x) - (u)_{E\cup F}   \right|^\tau dx  \\ \leq \frac{C|E \cup F|}{\min\{|E|,|F|\}}\fint_{E\cup F}\left|u(x)- (u)_{E\cup F} \right|^\tau dx.
\end{multline*}
Applying lemma \ref{lemmasob}, with $\lambda = 1, r= a$ and $R=c,$ we have
 \begin{equation*} \label{6} 
    \left( \fint_{E\cup F} |u(x) - (u)_{E\cup F}|^\tau dx \right)^{\frac{1}{\tau}} \leq C [u]_{s,p, E\cup F}, \hspace{3mm} \forall u \in C^1([a,c]). 
\end{equation*}
Finally combining \eqref{5} and \eqref{6} the result follows.
\end{proof}

  \begin{lemma}\emph{[An Inequality]}\label{lem2} \ Let $\Lambda, \tau >1$. There exist $C(\Lambda,  \tau) > 0, $ such that for all $1 < D_\tau < \Lambda$, one has 
  $$\left(|a| +|b|\right)^\tau \leq D_\tau |a|^\tau + \frac{C}{(D_\tau -1)^{\tau -1}}|b|^\tau.$$
  \end{lemma}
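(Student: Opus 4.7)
The plan is to derive the bound from convexity of $t\mapsto t^\tau$ (which holds since $\tau>1$) by choosing a splitting parameter so that the coefficient of $|a|^\tau$ matches the prescribed value $D_\tau$. For any $\alpha\in(0,1)$, writing $|a|+|b|=\alpha\cdot(|a|/\alpha)+(1-\alpha)\cdot(|b|/(1-\alpha))$ and applying Jensen's inequality gives
\[
(|a|+|b|)^\tau \;\leq\; \frac{|a|^\tau}{\alpha^{\tau-1}}+\frac{|b|^\tau}{(1-\alpha)^{\tau-1}}.
\]

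Next I would force $\alpha^{1-\tau}=D_\tau$ by taking $\alpha=D_\tau^{-1/(\tau-1)}$; since $D_\tau>1$, this indeed lies in $(0,1)$. A short calculation yields $1-\alpha=(D_\tau^{1/(\tau-1)}-1)/D_\tau^{1/(\tau-1)}$, and hence
\[
(1-\alpha)^{1-\tau}=\frac{D_\tau}{\bigl(D_\tau^{1/(\tau-1)}-1\bigr)^{\tau-1}}.
\]
Substituting back, the displayed inequality becomes $(|a|+|b|)^\tau\leq D_\tau|a|^\tau+(1-\alpha)^{1-\tau}|b|^\tau$, so that the coefficient of $|a|^\tau$ is already in the target form.

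The only genuine obstacle is to show that $(1-\alpha)^{1-\tau}\leq C(\Lambda,\tau)(D_\tau-1)^{-(\tau-1)}$, which amounts to bounding the function
\[
\psi(D):=D\left(\frac{D-1}{D^{1/(\tau-1)}-1}\right)^{\tau-1}
\]
uniformly for $D\in(1,\Lambda]$. This is mild: $\psi$ is continuous on $(1,\Lambda]$, and by L'H\^opital's rule (or a Taylor expansion at $D=1$) one has $\lim_{D\to 1^+}(D-1)/(D^{1/(\tau-1)}-1)=\tau-1$, so $\psi$ extends continuously to $[1,\Lambda]$ with $\psi(1)=(\tau-1)^{\tau-1}$. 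Setting $C(\Lambda,\tau):=\sup_{D\in(1,\Lambda]}\psi(D)$ therefore produces a finite constant depending only on $\Lambda$ and $\tau$, and the claimed inequality follows.
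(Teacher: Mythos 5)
Your proof is correct, and it follows a genuinely different route from the paper's. The paper substitutes $x=|a|/|b|$ and studies the one–variable function $f(x)=(1+x)^\tau - D_\tau x^\tau - C(D_\tau-1)^{1-\tau}$ directly, arguing sign conditions via $f(1)<0$, $f(x)\to-\infty$, and the critical point $x_0=(D_\tau^{1/(\tau-1)}-1)^{-1}$. You instead use the weighted convexity (Young/Jensen) split $(|a|+|b|)^\tau\le \alpha^{1-\tau}|a|^\tau+(1-\alpha)^{1-\tau}|b|^\tau$, lock in $\alpha=D_\tau^{-1/(\tau-1)}$ to hit the coefficient $D_\tau$ exactly, and then reduce the constant problem to bounding $\psi(D)=D\bigl((D-1)/(D^{1/(\tau-1)}-1)\bigr)^{\tau-1}$ on $(1,\Lambda]$, which your L'H\^opital computation shows extends continuously to $D=1$. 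The two arguments are dual: your $\alpha$ reproduces the paper's $x_0$ via $x_0=\alpha/(1-\alpha)$, the point at which the Jensen split is tight. What your version buys is a cleaner logical structure — no sign-checking of $f$ on $(0,1]$ and at the critical point, no implicit assumption that $x_0>1$ (which the paper's sketch glosses over and in fact fails once $D_\tau\geq 2^{\tau-1}$) — and an explicit description of an admissible constant, namely $C(\Lambda,\tau)=\sup_{(1,\Lambda]}\psi$, obtained purely from continuity and compactness. The paper's version is shorter to state but relies on the reader to fill in the calculus details.
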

\noindent The proof of the above inequality is done in  lemma 2 of \cite{Nguyenmarco2019},  but we give a short here for the completeness of this article.
\begin{proof}
Putting $x= |a|/|b|$, it is sufficient to prove that $$(1+x)^\tau\leq 
 D_\tau x^\tau + \frac{C}{(D_\tau -1)^{\tau -1}}, \hspace{3mm} x >0.$$
  Consider the function $f(x)= (1+x)^\tau - 
 D_\tau x^\tau - \frac{C}{(D_\tau -1)^{\tau -1}}$. The above inequality is trivially true for $x \in (0,1]$  for choice of $C$ large enough. Notice that $f(1) <0$ and $\lim_{x \rightarrow \infty}f(t) = -\infty$. Also there exist unique $x_0 = (D_\tau^{\frac{1}{\tau -1}}-1)^{-1}\in (1,\infty)$, such that $f'(x_0)= 0$ and $f(x_0) <0$. All this together proves the inequality.
\end{proof}

Now we state and give a sketch of the proof of  another elementary lemma that will be used several times later on.

\begin{lemma}
    \label{lem3}
    Let $m_i \geq 0$ and $\lambda \in (0,1]$, then 
    $$\left(\sum_{i \in \mathbb{Z}} m_i\right)^\lambda \leq \sum_{i \in \mathbb{Z}} m_i^\lambda.$$
\end{lemma}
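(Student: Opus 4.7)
The inequality to prove is the classical concavity-type bound, sometimes called the subadditivity of the map $t \mapsto t^\lambda$ for $\lambda \in (0,1]$. I would outline the proof as follows.

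First, I would handle the degenerate cases: if $\lambda=1$ the inequality is trivially an equality; if all $m_i$ vanish both sides are $0$. So assume $\lambda \in (0,1)$ and that at least one $m_i > 0$. I would then reduce to finite sums: since $m_i \geq 0$, both $\sum_i m_i$ and $\sum_i m_i^\lambda$ are monotone limits of their partial sums over finite subsets $F \subset \mathbb{Z}$, so it suffices to establish
\[
\Bigl(\sum_{i \in F} m_i\Bigr)^\lambda \leq \sum_{i \in F} m_i^\lambda
\]
for every finite $F$ (in the case $\sum_i m_i = +\infty$, we would still conclude $\sum_i m_i^\lambda = +\infty$ by letting $F$ exhaust $\mathbb{Z}$).

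For the finite-sum case, the cleanest route is the normalization trick. Let $S := \sum_{i \in F} m_i$; we may assume $S>0$. Set $t_i := m_i/S \in [0,1]$, so that $\sum_{i \in F} t_i = 1$. The key elementary fact is that for $\lambda \in (0,1]$ and $t \in [0,1]$ one has $t^\lambda \geq t$ (immediate from $t^{\lambda - 1} \geq 1$ when $t \in (0,1]$, and trivial at $t=0$). Summing this pointwise bound over $i \in F$ yields
\[
\sum_{i \in F} \Bigl(\frac{m_i}{S}\Bigr)^\lambda \;\geq\; \sum_{i \in F} \frac{m_i}{S} \;=\; 1,
\]
which after multiplying through by $S^\lambda$ gives exactly $\sum_{i \in F} m_i^\lambda \geq S^\lambda$, as required.

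An equally acceptable alternative I would mention is to first prove the two-term inequality $(a+b)^\lambda \leq a^\lambda + b^\lambda$ (e.g.\ by studying $f(t) = (a+t)^\lambda - a^\lambda - t^\lambda$, noting $f(0)=0$ and $f'(t) \leq 0$ for $t \geq 0$ since $\lambda - 1 \leq 0$ and $(a+t)^{\lambda-1} \leq t^{\lambda-1}$) and then induct on $|F|$. There is no real obstacle here; the only small subtlety worth flagging in the write-up is the $S = +\infty$ case, which is handled automatically by the monotone-convergence reduction above.
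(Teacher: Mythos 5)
Your proof is correct, and your primary route is genuinely different from the paper's. The paper proves the two-term inequality $(x+1)^\lambda \le x^\lambda + 1$ by analyzing $f(x) = (x+1)^\lambda - x^\lambda - 1$ (noting $f(0)=0$, $f'(x)<0$ for $x>0$), then extends to $n$ terms by induction, and finally to the infinite sum by taking a limit. You instead normalize by $S = \sum_{i\in F} m_i$, invoke the pointwise bound $t^\lambda \ge t$ for $t\in[0,1]$, sum, and rescale; this dispenses with the induction entirely and treats a finite collection in one stroke, leaving only the (standard) monotone-convergence passage to the full sum. Your approach is a bit cleaner and more self-contained; the paper's is perhaps more elementary in the sense of reducing everything to a one-variable calculus fact, but at the cost of the inductive bookkeeping. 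Your ``equally acceptable alternative'' paragraph is essentially the paper's argument, so you have in fact covered both routes. One small remark: you correctly flag the $S=+\infty$ case, which the paper handles implicitly by its limit step; your explicit treatment is slightly more careful.
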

\begin{proof}
The proof is by induction argument. For $i=2$, due to homogeneity, it is sufficient to prove that the function $f(x) = (x+1)^\lambda -x^\lambda -1$ is non-positive on the set $x \geq 0$. But this follows easily after observing $f(0)=0$ and $f'(x) <0 $ for $x >0$. Then assuming the inequality to be true for $i=n$, it is easy to prove that it is true for $i=n+1$ by writing $$\sum_{i=1}^{n+1}m_i = \left( \sum_{i=1}^{n}m_i \right) + m_{n+1}$$ and using the first case. Finally the proof follows by taking limit $n$ to infinity.
\end{proof}

  \section{Proof of Theorem \ref{mainresult}}
The following theorem will be used to proof our main theorem \ref{mainresult}.
  \begin{theorem}
      \label{intermediate} Under the assumptions of Theorem \ref{mainresult}, we have 
      \begin{align*}
            \left\| \frac{|\delta_x|^\gamma u}{\log(2/\delta_x)}\right\|_{L^\tau((0,1/2))}\leq C\left( [u]_{W^{s,p}((0,1))}^p + |u|_{p,(0,1)}^p\right)^{\frac{1}{p}}, \ \ \forall\,u\in C^1([0,1]),
        \end{align*}
        where $\delta_x = dist(x, (0,1)^c).$
     \end{theorem}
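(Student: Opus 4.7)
The plan is to work on $(0, 1/2)$, where $\delta_x = x$ and the condition $1 + \tau\gamma = 0$ forces $x^{\gamma\tau} = x^{-1}$, and to estimate $\int_0^{1/2}|u(x)|^\tau/(x\log(2/x)^\tau)\,dx$ via a dyadic decomposition in the spirit of \cite{Nguyen2018}. Set $I_k := (2^{-k-1}, 2^{-k})$ for $k \ge 1$, so that the $I_k$'s are disjoint and cover $(0, 1/2)$; on each $I_k$ one has $\log(2/x) \simeq k$ and $\int_{I_k} dx/x \le 1$, hence
\begin{equation*}
\int_0^{1/2} \frac{|u(x)|^\tau}{x \log(2/x)^\tau}\,dx \le C \sum_{k \ge 1} \frac{1}{k^\tau} \fint_{I_k} |u|^\tau \,dx.
\end{equation*}
I would then split $u = (u - (u)_{I_k}) + (u)_{I_k}$ and treat the oscillation and the constant pieces separately.

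For the oscillation piece, Lemma \ref{lemmasob} applied with $\lambda = 2^{-k-1}$, $r = 1$, $R = 2$ yields $\fint_{I_k}|u-(u)_{I_k}|^\tau\,dx \le C[u]_{s,p,I_k}^\tau$. The assumption $\gamma \in [-s,0)$ together with $sp = 1$ forces $\tau = -1/\gamma \ge 1/s = p$, so using the disjointness of the $I_k$ in $(0,1)$,
\begin{equation*}
\sum_{k\ge1} \frac{[u]_{s,p,I_k}^\tau}{k^\tau} \le [u]_{s,p,(0,1)}^{\tau-p} \sum_{k\ge 1} [u]_{s,p,I_k}^p \le C\, [u]_{s,p,(0,1)}^\tau.
\end{equation*}

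For the constant piece, set $I_0 := (1/2, 1)$ and $J_j := I_{j-1} \cup I_j = (2^{-j-1}, 2^{-j+1})$. Lemma \ref{lem1} yields $|(u)_{I_j} - (u)_{I_{j-1}}| \le C\,[u]_{s,p,J_j} =: C d_j$, and Jensen's inequality gives $|(u)_{I_0}| \le C|u|_{p,(0,1)}$, so telescoping delivers $|(u)_{I_k}| \le C(|u|_{p,(0,1)} + S_k)$ with $S_k := \sum_{j=1}^k d_j$. The decisive step is now to apply Lemma \ref{lem3} with exponent $p/\tau \in (0,1]$:
\begin{equation*}
\Bigl(\sum_{k\ge 1} \frac{|(u)_{I_k}|^\tau}{k^\tau}\Bigr)^{p/\tau} \le \sum_{k\ge 1} \frac{|(u)_{I_k}|^p}{k^p}.
\end{equation*}
On the right, the $|u|_{p,(0,1)}$-piece contributes $C|u|_{p,(0,1)}^p \sum_k k^{-p}$, which is convergent since $p > 1$, while the $S_k$-piece is exactly a discrete Hardy sum, so $\sum_k(S_k/k)^p \le C\sum_j d_j^p \le C[u]_{s,p,(0,1)}^p$ by the classical discrete Hardy inequality for sequences combined with the bounded-overlap property (multiplicity two) of the $J_j$'s inside $(0,1)$. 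Raising to the power $\tau/p$, combining with the oscillation estimate, and taking a $1/\tau$ root delivers the stated inequality.

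The main obstacle that this plan is designed to defeat is the direct estimation of $\sum_k S_k^\tau/k^\tau$: a straight $\tau$-level H\"older against the weight $k^{-\tau}$ either produces a divergent tail or an extra logarithmic factor, because the increments $d_j$ are only $p$-summable (not $\tau$-summable) from the fractional seminorm. Downgrading the exponent from $\tau$ to $p$ via Lemma \ref{lem3} \emph{before} invoking discrete Hardy is precisely what is needed, and it is also the step that makes the condition $\tau \ge p$ natural here.
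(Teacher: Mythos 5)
Your argument is correct and reaches the stated estimate, but it differs from the paper's proof in the one step that is actually delicate: controlling the weighted sum $\sum_k k^{-\tau}|(u)_{I_k}|^\tau$ of the dyadic averages. Both proofs set up the same dyadic decomposition (your $I_k$ are the paper's $A_{-k-1}$), both use Lemma \ref{lemmasob} for the oscillation piece, Lemma \ref{lem1} for adjacent mean differences, and Lemma \ref{lem3} to downgrade the seminorm sum from exponent $\tau$ to $p$. Where you diverge is in the treatment of the averages: you apply Lemma \ref{lem3} to lower the exponent from $\tau$ to $p$ \emph{before} telescoping, and then invoke the classical discrete Hardy inequality $\sum_k(k^{-1}\sum_{j\le k}d_j)^p\le C_p\sum_j d_j^p$ as a black box, together with the bounded overlap of the $J_j$. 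The paper instead stays at exponent $\tau$ throughout and re-derives a weighted discrete Hardy estimate from scratch by invoking the numerical inequality of Lemma \ref{lem2} with a $k$-dependent parameter $D_\tau^k=\bigl((-k)/(-k-1/2)\bigr)^{\tau-1}$, dividing by $(-k)^{\tau-1}$, summing, and telescoping; it never cites the classical discrete Hardy inequality and makes no use of Lemma \ref{lem3} in that sub-step. Your route is somewhat more modular (the reader recognizes discrete Hardy immediately) and avoids Lemma \ref{lem2} entirely, at the cost of a further application of Lemma \ref{lem3}; the paper's route is self-contained within its own lemmas but hides the Hardy mechanism inside the $D_\tau^k$ trick. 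One minor point, also implicit in the paper: when applying Lemma \ref{lem1} on pairs $I_{j-1},I_j$ one needs the constant to be independent of $j$, which requires using the scale-invariant form of Lemma \ref{lemmasob} (normalize to $r=1$, $R=4$, $\lambda=2^{-j-1}$) rather than the $\lambda=1$ normalization written in Lemma \ref{lem1}'s proof; you appeal to this implicitly, exactly as the paper does.
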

     Let us finish the proof of our main theorem using the Theorem \ref{intermediate}, then we will provide the proof of it.\smallskip

    \noindent \textbf{\textit{Proof of Theorem \ref{mainresult}}} \ First observe that it is sufficient to proof the required inequality for functions in $C^1([0,1])$ as it is a dense subset of $W^{s,p}((0,1)).$
 Applying Theorem \ref{intermediate} on $u - (u)_{(0,1)} \in C^1{([0,1])}$,  we obtain 
 \begin{align*}
            \left\| \frac{|\delta_x|^\gamma (u-(u)_{(0,1)}}{\log(2/\delta_x)}\right\|_{L^\tau((0,1/2))}\leq C\left( [u-(u)_{(0,1)}]_{W^{s,p}((0,1))}^p + |u-(u)_{(0,1)}|_{p,(0,1)}^p\right)^{\frac{1}{p}}.
        \end{align*}
        Applying  Poincar\'e inequality and using $$[u-(u)_{(0,1)}]_{W^{s,p}((0,1))}= [u]_{W^{s,p}((0,1))},$$ we get 
    \begin{align*}\label{1k}
            \left\| \frac{|\delta_x|^\gamma (u-(u)_{(0,1)})}{\log(2/\delta_x)}\right\|_{L^\tau((0,1/2))}\leq C [u-(u)_{(0,1)}]_{W^{s,p}((0,1))}, \ \ \forall u\in C^1([0,1]).
        \end{align*}    
        Now independently, notice that using the change of variable $1-x=t$, one has 
        \begin{equation*}\label{k3}
\left\| \frac{|\delta_x|^\gamma (u-(u)_{(0,1)})}{\log(2/\delta_x)}\right\|_{L^\tau((1/2,1))} = \left\| \frac{|\delta_t|^\gamma (u-(u)_{(0,1)})}{\log(2/\delta_t)}\right\|_{L^\tau((0,1/2))}.\end{equation*}
We write
$$\left\| \frac{|\delta_x|^\gamma (u-(u)_{(0,1)})}{\log(2/\delta_x)}\right\|_{L^\tau((0,1))}^\tau = 
\left\| \frac{|\delta_x|^\gamma (u-(u)_{(0,1)})}{\log(2/\delta_x)}\right\|_{L^\tau((0,1/2))}^\tau + \left\| \frac{|\delta_x|^\gamma (u-(u)_{(0,1)})}{\log(2/\delta_x)}\right\|_{L^\tau((1/2,1))}^\tau := A +B.
$$
We can apply directly the previous theorem (Theorem \ref{intermediate})  on the term $A$ to get the required right hand, while an analogous version of Theorem \ref{intermediate} can also be proved for interval $(1/2,1)$ to estimate the term $ B$.. Combining them the proof is completed.
\bigskip 

\noindent \textbf{\textit{Proof of Theorem \ref{intermediate}}} \  Define the set $A_k := \left\{ x \ \big| \ 2^k  \leq x <2^{k+1}\right\}$ for $k \in \{  -1, -2, \cdots \} := \mathbb{Z}^-$. Notice that 
\begin{equation*}
(0,1) = \underset{k=-\infty}{\overset{-1}{\cup}} A_k.
\end{equation*}
Applying Sobolev inequality  for the critical case with 
$r=1, R=2, \lambda = 2^k$ for $k \in \mathbb{Z}^-$ and $D:= A_k$, we have  for some constant $C>0$ (independent of $k$),
\begin{equation}
    \label{sobo}\left( \frac{1}{2^k} \int_{A_k}|u- (u)_{A_k}|^\tau dx\right)^{\frac{1}{\tau}}\leq C [u]_{s,p, A_k}.
\end{equation}
Independently,  using convexity we have for some constant $C>0$,
\begin{equation}
    \label{convex}
    |u|^\tau \leq C \left\{|u-(u)_{A_k}|^\tau +|(u)_{A_k}|^\tau \right\}.
\end{equation}
Using the condition $\tau\gamma = -1$, we obtain 
\begin{equation*}
    \label{1}
\int_{A_k}|x|^{\tau \gamma}|u|^\tau dx \leq  2^{\tau \gamma(k+1)} |A_K|\fint_{A_k}|u|^\tau dx  =  C \fint_{A_k}|u|^\tau dx.
\end{equation*}
Now applying \eqref{sobo} and \eqref{convex},
\begin{equation*}
\int_{A_k}|x|^{\tau \gamma}|u|^\tau dx  \leq  \fint_{A_k}|(u)_{A_k}|^\tau dx + C[u]_{s,p,A_k}^\tau, \hspace{3mm} \forall k \in \mathbb{Z}^-. 
\end{equation*}
Let $k \leq -2$. For each $x \in A_k$, we have $2^k <  (\delta_x =) x <2^{k+1}.$ This implies that $2/x > 2^{-k}$ and hence $\log(2/x) >(-k)\log(2)$. Therefore, we have 
\begin{equation*}
    \label{2}
       \int_{A_k}\frac{|\delta_x|^{\tau \gamma}|u|^\tau}{\log^\tau(2/\delta_x)} dx = \int_{A_k}\frac{|x|^{\tau \gamma}|u|^\tau}{\log^\tau(2/x)} dx \leq C \left\{\frac{|(u)_{A_k}|^\tau}{(-k)^\tau} + \frac{[u]_{s,p,A_k}^\tau}{(-k)^\tau} \right\}.
\end{equation*}
Since $k \in \mathbb{Z}^-$, we have trivially
\begin{equation*}
    \label{2p}
    \int_{A_k}\frac{|\delta_x|^{\tau \gamma}|u|^\tau}{\log^\tau(2/\delta_x)} dx \leq C \left\{\frac{|(u)_{A_k}|^\tau}{(-k)^\tau} + [u]_{s,p,A_k}^\tau \right\}.
\end{equation*}
Summing the above inequality from $m \in \mathbb{Z}^-$ to $-2$, we obtain after using Lemma \ref{lem3} with $\lambda = p/\tau \leq 1$,
\begin{multline}
    \label{3}
    \int_{2^{m}}^{1/2} \frac{|\delta_x|^{\tau \gamma}|u|^\tau}{\log^\tau(2/\delta_x)}dx = \sum_{k=m}^{-2}\int_{A_k}\frac{|x|^{\tau \gamma}|u|^\tau}{\log^\tau(2/x)}dx \leq C \left\{\sum_{k=m}^{-2}\frac{|(u)_{A_k}|^\tau}{(-k)^\tau} +\sum_{k=m}^{-2} [u]_{s,p,A_k}^\tau \right\} \\
    \leq  C\sum_{k=m}^{-2}\frac{|(u)_{A_k}|^\tau}{(-k)^\tau}  + C[u]_{s,p,(0,1)}^\tau.
\end{multline}
Now independently, applying lemma \ref{lem1} with $E = A_k$ and $F= A_{k+1},$ we get for some constant $C>0$ (independent of $k$) 
\begin{equation*}
    \label{7}
    |(u)_{A_{k}} |^\tau \leq  \left|(u)_{A_{k+1}} +C [u]_{s,p, A_k\cup A_{k+1}}\right|^\tau.
\end{equation*}
For $k \in \mathbb{Z}^{-}\setminus \{-1\}$, consider the numbers $D_\tau^k := \left( \frac{-k}{-k - (1/2)}\right)^{\tau -1}  > 1.$ Observe that $D_\tau^k \leq 2.$ Now applying lemma \ref{lem2} with $D_\tau= D_\tau^k, \Lambda = 2^{\tau -1}$ we obtain 
\begin{equation}
    \label{8}
    |(u)_{A_{k}} |^\tau \leq \left( \frac{-k}{-k - (1/2)}\right)^{\tau -1}  \left|(u)_{A_{k+1}} \right|^\tau + C (-k)^{\tau -1}  [u]_{s,p, A_k\cup A_{k+1}}^\tau.
\end{equation}
In the above estimate we have used the fact that 
$$D_\tau^k - 1 \sim \frac{1}{-k}.$$
Dividing   both sides of \eqref{8} by $(-k)^{\tau -1}$, we have 
\begin{equation*}
    \label{9}
    \frac{|(u)_{A_{k}} |^\tau}{(-k)^{\tau -1}} \leq  \frac{(u)_{A_{k+1}}^\tau}{(-k - (1/2))^{\tau -1}}   +  C [u]_{s,p, A_k\cup A_{k+1}}^\tau.
\end{equation*}
Summing the above inequality from 
$k=m$ to $k=-2$, we have 
\begin{equation*}
    \label{10}
  \sum_{k=m}^{-2}\frac{|(u)_{A_{k}} |^\tau}{(-k)^{\tau -1}} \leq \sum_{k=m}^{-2} \frac{(u)_{A_{k+1}}^\tau}{(-k - (1/2))^{\tau -1}}   +  C\sum_{k=m}^{-2}   [u]_{s,p, A_k\cup A_{k+1}}^\tau.   
\end{equation*}
Changing sides, rearranging and re-indexing,   we get 
\begin{equation}
    \label{12}
    \frac{|(u)_{A_{m}} |^\tau}{(-m)^{\tau -1}} + \sum_{k=m+1}^{-2} \left\{ \frac{1}{(-k)^{\tau -1}} - \frac{1}{(-k+(1/2))^{\tau -1}}\right\}|(u)_{A_{k}} |^\tau \leq C(\tau)|(u)_{A_{-1}} |^\tau  + C\sum_{k=m}^{-2}  [u]_{s,p, A_k\cup A_{k+1}}^\tau.  
\end{equation}
Now using the asymptotics, 
$$\frac{1}{(-k)^{\tau -1}} - \frac{1}{(-k+(1/2))^{\tau -1}} \sim \frac{1}{(-k)^{\tau -1}}.$$
Putting this in \eqref{12},
\begin{equation}\label{ss}
\sum_{k=m}^{-2}  \frac{|(u)_{A_{k}} |^\tau}{(-k)^{\tau -1}} \leq C(\tau)|(u)_{A_{-1}}|^\tau  + C\sum_{k=m}^{-2}   [u]_{s,p, A_k\cup A_{k+1}}^\tau \leq C(\tau)|(u)_{A_{-1}}|^\tau  + C   [u]_{s,p, (0,1)}^\tau .
\end{equation}
In the last inequality we have used Lemma \ref{lem3} with $\lambda = p/\tau \leq 1, m_i = [u]_{s,p,A_i\cup A_{i+1}}^\tau$ for $i= m, m+1, \cdots, -2$ and remaining $m_i =0$.
Independently we have after using H\"older's inequality,
\begin{equation}\label{ee}
(u)_{A_{-1}} = 2 \int_{\frac{1}{2}}^1 u(s)ds \leq 2 \int_0^1|u(s)|ds \leq 2 |u|_{p,(0,1)}.
\end{equation}
Combining \eqref{ss}, \eqref{3} together with \eqref{ee},  we get
\begin{equation}\label{m}
   \left( \int_{2^{m}}^{1/2} \frac{|\delta_x|^{\tau \gamma}|u|^\tau}{\log^\tau(2/\delta_x)}dx \right)^\frac{1}{\tau} \leq   \left( C'(\tau) |u|_{p,(0,1)}^\tau + C [u]_{s,p,(0,1)}^\tau \right)^{\frac{1}{\tau}} \leq \left( C''(\tau) |u|_{p,(0,1)}^p + C' [u]_{s,p,(0,1)}^p\right)^{\frac{1}{p}}.
\end{equation}
In the last inequality we have again used Lemma \ref{lem3} (with only two non zero values of $m_i$) with $\lambda = p/\tau \leq 1.$  Finally taking limit $m \rightarrow -\infty$ in \eqref{m} and using monotone convergence theorem we get the desired proof. \bigskip

Optimality of the weight function will be done later.

\bigskip 

\section{Density Theorem}
First we prove the density theorem as mentioned in the introduction section, as some part of the calculation with the sequence of test functions $u_\epsilon$ (see below) will be required for us in the proof of optimality part in Theorem \ref{mainresult}.

\begin{theorem}
    \label{dense} Let $sp=1, p=2$ then the following sequence of function converges to constant function $1$ in $W^{s,p}(0,1)$:
     \begin{equation}\label{uepsi}
u_\epsilon(x) = 
       \begin{cases}
      \frac{|\log \epsilon |}{|\log x|},  \hspace{8mm} x \in (0,\epsilon)\\
      1, \hspace{13mm}  x \in (\epsilon, 1-\epsilon),\\
       \frac{|\log (1-\epsilon)|}{|\log (1-x)|}, \hspace{3mm}x \in (1-\epsilon, 1).
       \end{cases}
\end{equation}
\end{theorem}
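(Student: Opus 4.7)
The plan is to handle the two pieces $\|u_\epsilon - 1\|_{L^p(0,1)}$ and $[u_\epsilon - 1]_{s,p} = [u_\epsilon]_{s,p}$ of the $W^{s,p}$-norm separately. Since $u_\epsilon - 1$ vanishes on $(\epsilon, 1-\epsilon)$ and is pointwise bounded by $1$ on its support, the $L^p$ contribution is at most $2\epsilon \to 0$. The actual task is to show $[u_\epsilon]_{s,p} \to 0$.

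For the seminorm I would partition $(0,1) = A \cup B \cup C$ with $A := (0,\epsilon)$, $B := (\epsilon,1-\epsilon)$, $C := (1-\epsilon,1)$ and split the double integral accordingly. Since $u_\epsilon \equiv 1$ on $B$, the $B \times B$ contribution is zero, and by the reflection $x \mapsto 1-x$ it suffices to estimate
\[ I_1 := \iint_{A\times A}\frac{(u_\epsilon(x)-u_\epsilon(y))^2}{(x-y)^2}\,dx\,dy,\quad I_2 := \iint_{A\times B}(\cdots),\quad I_3 := \iint_{A\times C}(\cdots). \]
The $I_3$ piece is immediate: $|x-y|\ge 1-2\epsilon$ forces $I_3 = O(\epsilon^2)$. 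Both $I_1$ and $I_2$ I would treat by the exponential substitution $x = e^{-t}$, which converts $u_\epsilon(x) = L/|\log x|$ (with $L := |\log\epsilon|$) into the much simpler $L/t$ on $t>L$.

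For $I_2$, the inner integral $\int_\epsilon^{1-\epsilon}(y-x)^{-2}\,dy \le 1/(\epsilon-x)$ combined with the change $t = L+\sigma$, $\sigma \ge 0$, reduces the estimate to
\[ I_2 \le C\int_0^\infty \frac{\sigma^2 e^{-\sigma}}{(L+\sigma)^2 (1-e^{-\sigma})}\,d\sigma, \]
which a routine split $\sigma \in (0,1)$ versus $\sigma \in (1,\infty)$ shows is $O(L^{-2})$. For $I_1$, applying the substitution to both variables and using $(e^{-t}-e^{-s})^2 = e^{-2\min(t,s)}(1-e^{-|t-s|})^2$, the kernel decouples in $s$ and $r := t-s$ (WLOG $r>0$), and
\[ I_1 \lesssim L^2 \int_L^\infty \frac{ds}{s^2}\int_0^\infty \frac{r^2 e^{-r}}{(s+r)^2 (1-e^{-r})^2}\,dr. \]
Since $r^2 e^{-r}/(1-e^{-r})^2$ is integrable on $(0,\infty)$ (behaving like $1$ at $0$ and $r^2 e^{-r}$ at $\infty$), the inner integral is $O(s^{-2})$, and the outer one yields $L^2 \int_L^\infty s^{-4}\,ds = O(L^{-1})$.

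The main obstacle is the $I_1$ estimate: the Gagliardo kernel is doubly singular over the shrinking interval $A$, and it is precisely the logarithmic factor in $u_\epsilon$ that barely absorbs this singularity. The exponential change of variables $x = e^{-t}$ is the key move, because it simultaneously linearizes $u_\epsilon$ into $L/t$ and converts the singular behavior at $0$ into integrable algebraic decay at $\infty$, so the apparently dangerous prefactor $L^2$ ends up being swallowed by $\int_L^\infty s^{-4}\,ds$. Combining the three estimates yields $[u_\epsilon]_{s,p}^2 = O(L^{-1}) + O(L^{-2}) + O(\epsilon^2) \to 0$, and the theorem follows.
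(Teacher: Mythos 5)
Your proof is correct and follows essentially the same route as the paper: the same logarithmic substitution $x = e^{-t}$, the same reduction to the integrable kernel $g(r) = r^2 e^{-r}/(1-e^{-r})^2$, and the same $O(|\log\epsilon|^{-1})$ and $O(|\log\epsilon|^{-2})$ rates for the diagonal and off-diagonal pieces. You are in fact slightly more careful than the paper in that you explicitly account for the far cross term $\iint_{A\times C}$ (which the paper's decomposition silently drops, harmlessly, as $O(\epsilon^2)$) and you treat the $A\times A$ piece in one symmetric pass via $r = |t-s|$ rather than splitting it into two half-integrals.
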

\begin{proof}
We want to show that $$[u_\epsilon - 1]_{s,p,(0,1)} + |u_\epsilon - 1|_{p,(0,1)} \rightarrow 0 \  \textrm{as} \ \epsilon \rightarrow 0.$$  
$|u_\epsilon - 1|_{p,(0,1)} \rightarrow 0$ follows from application Lebesgue dominated convergence theorem. Since $[u_\epsilon - 1]_{s,p,(0,1)}=[u_\epsilon]_{s,p,(0,1)}$, it is sufficient to show $[u_\epsilon ]_{s,p,(0,1)}  \rightarrow 0$. Using change of variable, definition and symmetry of the $u_\epsilon$, we can write 
$$[u_\epsilon ]_{s,p,(0,1)} = 2[u_\epsilon ]_{s,p,(0,\epsilon)} + 2 \int_0^\epsilon\int_{\epsilon}^{1-\epsilon}\frac{|u_\epsilon(x) -u_\epsilon(y)|^2}{|x-y|^2}dxdy := I_\epsilon + K_\epsilon. $$
Let us first estimate the term $I_\epsilon.$
Change the variable as $\log x =-s, \ \log y = -t$, to get
\begin{multline}
I_{\epsilon} = (\log\epsilon)^2 \int_{
|log \epsilon |}^\infty \int_{|\log\epsilon |}^\infty \frac{(s-t)^2 }{(st)^2 (e^{-s} -e^{-t})^2}e^{-s}e^{-t} dsdt \\ =  (\log\epsilon)^2 \int_{
|log \epsilon |}^\infty \int_{|\log\epsilon |}^\infty \frac{(s-t)^2 }{(st)^2 (e^s -e^t)^2}e^se^t dsdt 
= (\log\epsilon)^2 \int_{
|log \epsilon|}^\infty \frac{1}{t^2} \int_{
|log \epsilon|}^\infty\frac{1}{s^2}\frac{(s-t)^2}{ (e^{s-t} + e^{t-s} -2)}ds dt\\
= (\log\epsilon)^2 \int_{
|log \epsilon|}^\infty \frac{1}{t^2} \int_{
|log \epsilon|}^t\frac{1}{s^2}\frac{(s-t)^2}{ (e^{s-t} + e^{t-s} -2)}ds dt + (\log\epsilon)^2 \int_{
|log \epsilon|}^\infty \frac{1}{t^2} \int_t^{
|log \epsilon|}\frac{1}{s^2}\frac{(s-t)^2}{ (e^{s-t} + e^{t-s} -2)}dsdt \\
:= I_1^\epsilon+I_2^\epsilon.
\end{multline}
Now introducing the change of variable $s-t = x$ for a fixed $t$ in $I_2$ first to get 
\begin{equation*}\label{ffd}
I_2^\epsilon =  (\log\epsilon)^2 \int_{
|log \epsilon|}^\infty \frac{1}{t^2} \left(\int_0^{
\infty}\frac{g(x)}{(t+x)^2}dx\right)ds,
\end{equation*}
where $g(x) = \frac{x^2}{e^x+e^{-x}-2}.$ Note that  $g(x) \rightarrow 1$ as $x \rightarrow 0$ and $g(x) \leq Cx^2e^{-x}$. This implies that  $$\int_0^\infty g(x)dx <\infty.$$ Therefore 
$$I_2^\epsilon \leq C (\log\epsilon)^2 \int_{
|log \epsilon|}^\infty \frac{dt}{t^4} = \frac{C}{|\log\epsilon|} \rightarrow 0.$$
Using the same change of variable for $I_1^\epsilon$ and $t+x \geq |\log\epsilon|$ we obtain 
\begin{equation*}\label{i1e}
I_1^\epsilon = (\log\epsilon)^2 \int_{
|log \epsilon|}^\infty \frac{1}{t^2} \left(\int_{|\log\epsilon| -t}^{0}\frac{g(x)}{(t+x)^2}dx\right)dt \leq  \int_{
|log \epsilon|}^\infty \frac{1}{t^2} \left(\int_{|\log\epsilon| -t}^{0} g(x)dx\right)dt.
\end{equation*}
Since $g$ is an even function and we already know that $\int_0^\infty g(x)dx <\infty$, we have finally 
$$I_1^\epsilon \leq C \int_{
|log \epsilon|}^\infty \frac{dt}{t^2} = \frac{C}{|\log\epsilon|} \rightarrow 0 ,\  \textrm{as} \ \epsilon \rightarrow 0.$$
Now let us turn to the estimate of $K_\epsilon.$
\begin{multline}
K_\epsilon =  \int_0^\epsilon \int_\epsilon^{1-\epsilon} \frac{(\log \epsilon -\log x)^2}{\log^2x (x-y)^2 }dydx=  \int_0^\epsilon  \frac{(\log \epsilon -\log x)^2}{\log^2x}  \left[\frac{1}{x-y}\right]_\epsilon^{1-\epsilon} dx\\=\int_0^\epsilon  \frac{(\log \epsilon -\log x)^2}{\log^2x} \left\{\frac{1}{x-1+\epsilon} -\frac{1}{x-\epsilon}\right\} dx
=(1-2\epsilon) \int_0^\epsilon  \frac{(\log \epsilon -\log x)^2}{(\log^2x) (\epsilon -x)(1-x -\epsilon)} dx.
\end{multline}
Note that $1-x -\epsilon \geq 1-2\epsilon$,  this implies 
$$K_\epsilon \leq   \int_0^\epsilon  \frac{(\log \epsilon -\log x)^2}{(\log^2x) (\epsilon -x)}dx.$$
Change the variable as $\log x =-s$  to get
\begin{eqnarray*}
K_\epsilon \leq  \int_
{|\log\epsilon|}^\infty \frac{(\log\epsilon + s)^2}{s^2 (\epsilon - e^{-s})}e^{-s}ds = \int_
{|\log\epsilon|}^\infty \frac{(\log\epsilon + s)^2}{s^2 (\epsilon e^s- 1)}ds.
\end{eqnarray*}
Now again change  the variable as   $t = s +\log\epsilon$, to get 
$$K_\epsilon \leq \int_0^\infty \frac{t^2}{(t-\log \epsilon)^2(e^t-1)}dt= \int_0^\infty \frac{t^2}{(t+|\log \epsilon|)^2(e^t-1)}dt \leq \frac{1}{\log^2\epsilon} \int_0^\infty\frac{t^2}{(e^t-1)} dt \rightarrow 0. $$
The proof of the theorem is completed after noticing that the last integral is finite.
\end{proof}

\noindent\textbf{Optimality of the weight function:} \ We just treat the case $p=2$, the case of general $p$ can be treated with usual estimates and considering the same sequence of test function $u_\epsilon$.  Consider the sequence of function $u_\epsilon$ as in \eqref{uepsi}.  We have already  proved in Theorem \ref{dense} (after combining all the estimates therein), that

\begin{equation*}\label{ord}\int_0^1\int_0^1  \frac{(u_\epsilon(x) -u_\epsilon(y))^2}{|x-y|^{2}}dxdy \leq     \frac{C}{|\log\epsilon|} + o\left(\frac{1}{|\log\epsilon|}\right).\end{equation*}
For our notation $o(\delta)$ is a term such that $\frac{o(\delta)}{\delta}\rightarrow 0.$ 
Now we claim that the inequality in the above step is actually an equality. To prove this it is sufficient to prove that 
$$I_1^\epsilon, \  I_2^\epsilon \geq \frac{C}{|\log \epsilon|}.$$
But this is true because 
$$I_2^\epsilon \geq  (\log\epsilon)^2 \int_{
|log \epsilon|}^\infty \frac{1}{t^2} \left(\int_0^{
t}\frac{g(x)}{(t+x)^2}dx\right)dt \geq  (\log\epsilon)^2/4 \int_{
|log \epsilon|}^\infty \frac{1}{t^4}\left(\int_0^1 g(x) dx\right)dt. $$
In the last step we have used $t+x \leq 2t$ first and then $g(x) \geq 0$ and $t \geq |\log(\epsilon)| \geq 1$. Using similar estimates the other inequality also follows. Now let us calculate the  following expression: $$\int_0^1 \frac{(u_\epsilon- \bar{u_\epsilon})^2}{\delta_x (\log\delta_x)^2}dx = \int_0^1 \frac{(u_\epsilon- \bar{u_\epsilon})^2}{ \min\{x, 1-x\}\log^2(\min\{x,1-x \})}dx,$$ where  $$ \bar{u_\epsilon} = (u_\epsilon)_{(0,1)}.$$
First  let us calculate the term 
\begin{equation*}\label{bar}
\bar{u_\epsilon} =  2\int_0^\epsilon \frac{|\log\epsilon |}{| \log x |} dx+ \int_{\epsilon}^{1-\epsilon}dx = 2|\log\epsilon |\int_0^\epsilon \frac{dx}{|\log x |} + 1-2\epsilon  = 1 +C_\epsilon.
\end{equation*}
Also  note  that  the first term  $C_\epsilon
$ goes  faster to zero than $\epsilon$, after making an estimate $|\log x | \geq  |\log \epsilon |.$ Now  let us first estimate  the term 
\begin{multline}
\int_0^\epsilon \frac{(u_\epsilon - \bar{u_\epsilon})^2}{x \log^2x} dx=  \int_0^\epsilon \frac{u_\epsilon^2}{x \log^2x}dx+ \int_0^\epsilon \frac{\bar{u_\epsilon}^2}{x \log^2x}dx -2\int_0^\epsilon \frac{u_\epsilon  \bar{u_\epsilon}}{x \log^2x}dx \nonumber\\
 =\log^2\epsilon\int_0^\epsilon \frac{dx}{x \log^4x} + \bar{u_\epsilon}^2\int_0^\epsilon \frac{dx}{x \log^2x}  -2 \bar{u_\epsilon}\log\epsilon\int_0^\epsilon\frac{dx}{x\log^3x}.
\end{multline}
After making  change of the variable as $\log x = z$, we get finally after ``actual intergration",
$$\int_0^\epsilon \frac{(u_\epsilon - \bar{u_\epsilon})^2}{x \log^2x}dx = 
\frac{1}{3|\log\epsilon|} + \frac{\bar{u_\epsilon}^2}{|\log\epsilon|} - \frac{\bar{u_\epsilon}}{|\log\epsilon |} =  \frac{1}{3|\log\epsilon|} + o\left(\frac{1}{|\log \epsilon|}\right).$$
Now using \eqref{bar} we  will estimate   the following term (we estimate until $\frac{1}{2}$ as the other half can be estimated in similar way):
\begin{equation*}
\int_\epsilon^{.5} \frac{(u_\epsilon -\bar{u_\epsilon})^2}{x\log^2x}dx =   C_\epsilon^2 \int_\epsilon^{\frac{1}{2}} \frac{dx}{x\log^2x} = K C_\epsilon^2.
\end{equation*}
Note from $C_\epsilon^2 \rightarrow 0$ faster than $\epsilon^2$ (faster than $\frac{1}{|log\epsilon|}$), hence this term is of no importance. Therefore we have 
\begin{equation*}\label{oder}
\int_0^1 \frac{(u_\epsilon -\bar{u_\epsilon})^2}{\delta_x\log^2\delta_x}dx = \frac{1}{3|\log\epsilon|} + o\left(\frac{1}{|\log \epsilon|}\right).
\end{equation*}\smallskip 

\noindent Assume $\tau = p =2$. Now suppose that there exist a function, $f$ with the property that $f(x) \rightarrow \infty$ as $x \rightarrow 0+$ and there is a further improvement of the inequality in Theorem \ref{mainresult} in the sense that the following holds:
 \begin{align*}\label{neew}
            \left\| \frac{f(x)|\delta_x|^\gamma (u- (u)_{(0,1)})}{\log(2/\delta_x)}\right\|_{L^\tau((0,1))}\leq C[u]_{W^{s,p}((0,1))}, \ \ \forall\,u\in W^{s,p}((0,1)).
        \end{align*}
Consider the sequence of function $u_\epsilon$ as given in theorem \ref{dense}. Using similar type of computation as done above, it is easy to show that 
\begin{equation}\label{fast}
\int_0^1 \frac{(u_\epsilon -\bar{u_\epsilon})^2}{\delta_x\log^2\delta_x}dx \geq \frac{f(\epsilon)}{3|\log\epsilon|} + o\left(\frac{f(\epsilon)}{|\log \epsilon|}\right).\end{equation} 
This implies  as $\epsilon \rightarrow 0$,
$$ [u_\epsilon]_{1/2, 2, (0,1)}\left(\int_0^1 \frac{(u_\epsilon -\bar{u_\epsilon})^2}{\delta_x\log^2\delta_x}dx \right)^{-1} \leq \frac{C}{f(\epsilon)}\rightarrow 0,$$
and therefore clearly \eqref{fast} cannot be true. Similar argument will hold if $f \rightarrow \infty$ as $x \rightarrow 1-$.
\bigskip

\noindent\textbf{Acknowledgement:} \ Research of the third author is supported by Core Research Grant under project number CRG/2022/007867. Part of this work is carried out when the third author was visitng The Institute of Mathematical Sciences (IMSc). Purbita Jana and Prosenjit Roy would like to thank IMSc for all the facilities provided to them during their respective stay.  Prosenjit Roy would   also like to thank  Mr. Subhajit Roy for presenting him the proof of CKN inequality in \cite{Nguyen2018} and Mr. Vivek Sahu  for  pointing out some typos when the manuscript was in its earlier version.

\end{document}